\theoremstyle{theorem}
\newtheorem{theorem}{Theorem}[section]
\newtheorem{lemma}{Lemma}[section]
\newtheorem{proposition}{Proposition}[section]
\theoremstyle{definition}                           %stile roman
\theoremstyle{remark}                             %stile per osservazioni
\newtheorem{remark}{Remark}[section]              %definizione ambiente osservazione
\newtheorem{example}{Example}[section]
\newcommand{\be}{\begin{eqnarray}}
\newcommand{\ee}{\end{eqnarray}}
\DeclareMathOperator{\real}{Re}
\def\Cset{{\mathbb{C}}}
\def\Nset{{\mathbb{N}}}
\def\eu{{\ensuremath{\mathrm{e}}}}
\def\du{\,{\ensuremath{\mathrm{d}}}}
\def\texttiny#1{{\text{\tiny{#1}}}}
\def\DC{{}^{\texttiny{C}}\! D}
\def\DR{{}^{\texttiny{RL}}\! D}
\def\DG{{}^{\texttiny{GL}}\! D}
\def\fR{{f}^{\text{{\scriptsize{R}}}}}
\def\fC{{f}^{\text{{\scriptsize{C}}}}}
\def\DABC{{}^{\texttiny{ABC}}\! D}
\def\DCF{{}^{\texttiny{CF}}\! D}
\def\ICF{{}^{\texttiny{CF}}\! J}
\def\IAB{{}^{\texttiny{AB}}\! J}
\def\IRL{{}^{\texttiny{RL}}\! J}
\def\JPr{{\bm{\mathcal{J}}}}     % Integrale Prabhakar
\def\DPrC{{}^{\texttiny{C}}\!{\bm{\mathcal{D}}}}   % Derivata Caputo-Prabhakar
\renewcommand{\Re}{\operatorname{Re}}
\def\Dphi{D_{\phi}}
\def\Jpsi{J_{\psi}}
\begin{document}

\title[Why fractional derivatives with \dots]{Why fractional derivatives with \\ [3pt] nonsingular kernels should not be used}
	
		\author[K. Diethelm]{Kai Diethelm${}^{1}$}
		\address{${}^{1}$ 
		Fakult\"at Angewandte Natur- und Geisteswissenschaften,
 		University of Applied Sciences W\"urzburg-Schweinfurt,
 		Ignaz-Sch\"on-Str.\ 11, 97421 Schweinfurt, GERMANY
		and GNS mbH Gesellschaft f\"ur numerische Simulation mbH,
		Am Gau{\ss}berg 2, 38114 Braunschweig, GERMANY}	
 		\email{kai.diethelm@fhws.de}
		
		\author[R. Garrappa]{Roberto Garrappa${}^{2}$}
		\address{${}^{2}$ 
		Department of Mathematics,
 		University of Bari, 
		Via E. Orabona 4, 70126 Bari, ITALY
		and 
		the INdAM Research group GNCS}	
 		\email{roberto.garrappa@uniba.it}		

	    \author[A. Giusti]{Andrea Giusti${}^{3}$}
		\address{${}^{3}$ 
		Bishop's University,
		Physics $\&$ Astronomy Department, 
		2600 College Street, Sherbrooke, J1M 1Z7,
		QC	CANADA}	
 		\email{agiusti@ubishops.ca}

 		\author[M. Stynes]{Martin Stynes${}^{4}$}
		\address{${}^{4}$ 
		Applied and Computational Mathematics Division,
		Beijing Computational Science Research Center,
		Beijing 100193, CHINA}	
 		\email{m.stynes@csrc.ac.cn}
	
\begin{abstract}
In recent years, many papers discuss the theory and applications of new fractional-order derivatives that are constructed by replacing the singular kernel of the Caputo or Riemann-Liouville derivative by a non-singular (i.e., bounded) kernel. It will be shown here, through rigorous mathematical reasoning, that these non-singular kernel derivatives suffer from several drawbacks which should forbid their use. They fail to satisfy the fundamental theorem of fractional calculus since they do not admit the existence of a corresponding convolution integral of which the derivative is the left-inverse; and the value of the derivative  at the initial time $t=0$ is always zero, which imposes an unnatural restriction on the differential equations and models where these derivatives can be used. For the particular cases of the so-called Caputo-Fabrizio and Atangana-Baleanu derivatives, it is shown that when this restriction  holds the derivative can be simply expressed in terms of integer derivatives and standard Caputo fractional derivatives, thus demonstrating that these derivatives contain nothing new.
\end{abstract}

\thanks{\textbf{In}: \emph{Fract. Calc. Appl. Anal.}, Vol. \textbf{23}, No 3 (2020), pp. 610--634, 
DOI: 10.1515/fca-2020-0032 at \href{https://www.degruyter.com/view/j/fca}{https://www.degruyter.com/view/j/fca}}

    \maketitle

\section{Introduction}\label{sec:1}

Fractional calculus, namely the study of the generalization of the standard theory of calculus to derivatives and integrals of non-integer orders, has attracted much attention in recent years from different disciplines. It is not only of interest to mathematicians;  its success derives from its proven effectiveness in accurately describing innumerable physical phenomena, ranging from biophysics to astrophysics.

Throughout the history of this theory, several definitions for non-integer order operators have been proposed; each one is an attempt to extend the classical notions of integral and derivative. Among these proposals, two particular ones have stood the test of time and are now universally accepted: the celebrated works of Bernhard Riemann and Joseph Liouville, and its modification suggested by Mkhitar Dzhrbashyan.  Most notably, the latter turned out to be equivalent to the operator independently inferred by Michele Caputo as a direct result of the generalization of the standard Laplace transform of ordinary derivatives to the fractional regime. The proposal of Riemann and Liouville, which started the entire field of fractional calculus, was based on performing an analytic continuation of Cauchy's formula for repeated integration. This operator is now known as the Riemann-Liouville (RL) fractional integral. Similarly, starting from Cauchy's integral formula for the $n$th derivative of an analytic function, one can then provide a definition of fractional derivative. This definition of derivative is however well posed only when applied to some very well-behaved functions, so it is naturally desirable to extend the class of permissible functions.

This extension can be achieved by defining a fractional derivative via an integro-differential operator with a locally absolutely integrable kernel, in the form of the well-known Riemann-Liouville (RL) fractional derivative;  see~\eqref{eq:RL_Derivative} in Appendix \ref{S:BackgroundMaterial} for details. But this definition of fractional derivative is not the only possible extension, and an alternative definition was formulated much later by Dzhrbashyan and Caputo (who worked independently). This new idea is known in the literature as the Dzhrbashyan-Caputo (or, simply, Caputo for shortness) fractional derivative; for its definition see~\eqref{eq:Caputo_Derivative} in Appendix \ref{S:BackgroundMaterial}.

Each of the RL and Caputo derivatives of real order $\alpha>0$ is a {\em left-inverse} operator for the RL fractional integral. They are each represented as Volterra-like convolution integro-differential operators with integral kernel $k(t) = t^{m-\alpha-1}/\Gamma(m-\alpha)$, where $m=\left\lceil \alpha \right\rceil$ is the smallest integer greater than or equal to $\alpha$. If $\alpha$ is not an integer, this kernel is weakly singular at $t=0$ and it is locally absolutely integrable on the positive real axis. For further discussion we refer the reader to \cite{Diet10,GorenfloMainardi1997,KilbasSrivastavaTrujillo2006,Mainardi2010,Pod99,SKM93}.

The weakly singular nature of the kernel has some important consequences. For example, as shown in~\cite{Sty19}, solutions of time-dependent fractional differential equations (FDEs) with RL or Caputo derivatives typically exhibit weak singularities at the initial time~$t=0$. This phenomenon presents challenging difficulties for the theoretical and numerical analysis of FDEs. One can foresee that numerical difficulties are to be expected because standard numerical methods for solving differential equations are usually based on \emph{polynomial} approximations of the unknown solution --- but polynomials do not provide accurate approximations in the neighbourhood of singularities.
	
In an attempt to avoid the difficulties caused by singularities, some authors have proposed modifications of the RL and Caputo derivatives that are based on the replacement of their weakly singular kernel by some \emph{non-singular} function  that is continuous on the \emph{closed} interval $[0, T]$ with $T>0$. For instance, the exponential function was employed as a replacement for the standard kernel in the Dzhrbashyan-Caputo derivative \eqref{eq:Caputo_Derivative} of order $0<\alpha<1$ to obtain the following so-called Caputo--Fabrizio (CF) derivative:
\begin{equation}\label{eq:CF_Reg_Derivative}
	\DCF_{0}^{\alpha} f(t) = \frac{M(\alpha)}{1-\alpha} \int_{0}^t \exp\Bigl(-\frac{\alpha}{1-\alpha} (t-\tau)\Bigr) f'(\tau) \du \tau ,
\end{equation}
where $M(\alpha)$ is a normalization factor such that $M(0)=M(1)=1$. Similarly, the Mittag-Leffler function
\[
	E_{\alpha}(z) = \sum_{k=0}^{\infty} \frac{z^k}{\Gamma(\alpha k + 1)}
\]
is used instead of the exponential function to define the so-called Atangana-Baleanu (AB) derivative
\begin{equation}\label{eq:AB_Reg_Derivative}
	\DABC_{0}^{\alpha} f(t) = \frac{B(\alpha)}{1-\alpha} \int_{0}^t E_{\alpha}\Bigl(-\frac{\alpha}{1-\alpha} (t-\tau)^{\alpha}\Bigr) f'(\tau) \du \tau ,
\end{equation}
where $B(\alpha)$ has the same role and properties as $M(\alpha)$. We mention for completeness that RL-type versions of the CF and AB operators have also appeared in the literature.

{\sc Note:} For simplicity, in our integral operators we always take the initial time to be $t=0$, since a choice of a different initial time makes no essential difference to the arguments that we shall present.

\vspace*{-3pt}

\begin{remark}
The CF derivative \eqref{eq:CF_Reg_Derivative} has an analytic kernel, while the ABC derivative \eqref{eq:AB_Reg_Derivative} has a kernel that is continuous but not differentiable at~$t=0$. Continuity of the kernel suffices in our discussion, but later in the paper we use its derivative on some open interval $(0,T)$. Anyway, the essence of the integral operators discussed in this paper is that their kernel is bounded in $[0,T]$. Both kernels of CF and ABC derivatives are sufficiently well behaved to satisfy our arguments.
\end{remark}

At first sight, fractional derivatives defined using non-singular kernels may appear very attractive since they  avoid several difficulties that are caused by the singular nature of the RL and Dzhrbashyan-Caputo kernel. Thus, it is unsurprising that these simpler operators have become quite popular since their appearance about five years ago. {\emph{But these operators with non-singular kernels have serious shortcomings that strongly discourage their use}. Some previous papers \cite{Giusti2020,HanygaPrep20,Hanyga20,Kochubei2011,Sty18} have already mentioned some of these; here we attempt to give a comprehensive and coherent account of the drawbacks.

{\sc  Structure of the paper:} In Section \ref{sec:Hanyga} we prove first  that Caputo-type derivatives that are defined using non-singular kernels must fail to satisfy the fundamental theorem of fractional calculus. In other words, they do not allow the existence of a corresponding convolution integral for which the derivative is the left-inverse. Although one can find definitions of ``CF and AB integrals" $\ICF_{0}^{\alpha}$ and $\IAB_{0}^{\alpha}$ in the literature, nevertheless (as we shall see in Section~\ref{S:NotLeftInverse}) the corresponding derivatives do not act on them as a left-inverse, since $\DCF_{0}^{\alpha} \bigl[ \ICF_{0}^{\alpha} f(t) \bigr]  \ne f(t)$ and $\DABC_{0}^{\alpha} \bigl[ \IAB_{0}^{\alpha} f(t) \bigr] \ne f(t)$ unless the unnatural and restrictive condition $f(0)=0$ is imposed on the space of function where these operators act. That is, if these integrals are used to solve differential equations involving CF or ABC derivatives, they can provide a correct solution only when the vector field vanishes at the origin, which is clearly an unreasonable assumption for most problems. In Section~\ref{S:NotLeftInverse} we also show that this issue is shared by any derivative with non-singular kernel since they satisfy a \emph{zero-zero property}: the derivative at 0 is always 0. For time-fractional initial-boundary value problems of parabolic type, one finds that a related restriction is automatically imposed on the initial data --- see Section~\ref{S:IBV_Parabolic}. Moreover, in Section~\ref{S:CF_AB_Not_New} we show that, if one makes the unnatural assumption  that the vector field vanishes at the origin, in order to ensure that CF and ABC derivatives are left-inverse of CF and AB integrals, one then obtains differential equations that are equivalent to standard differential equations of integer or fractional order; in other words, the introduction of these derivatives does not add anything new to the standard (RL and Caputo) theory of fractional calculus.  In Section \ref{S:TrulyDerivatives} we show that derivative-type operators defined by a non-singular kernel fail to satisfy various proposed extensions of the classical notion of derivative, so whether one should use the term ``derivative'' to describe these operators is doubtful.  Some final remarks are given in Section~\ref{S:Concluding}.

{\sc  Notation.} We write $C[0,T]$ for the space of continuous functions on the interval~$[0,T]$ of the real line, and  $L^1[0,T]$ denotes the space of Lebesgue-integrable functions on this interval. The  space of absolutely continuous functions  on $[0,T]$ is denoted by $AC[0,T]$; recall  that $f\in AC[0,T]$ if and only if $f(t) = f(0) + \int_0^t f'(s)\du s$ for $0\le  t \le T$ with some $f'\in L^1[0,T]$.

%%%%%%%%%%%%% Section 2 %%%%%%%%%%%%%%%%%%%%%
\vspace*{-3pt}

\section{The fundamental theorem of fractional calculus}\label{sec:Hanyga}

For the Caputo derivative \eqref{eq:Caputo_Derivative}, by  \cite[Theorem 3.7]{Diet10} one has
\[
	\DC_0^{\alpha} \bigl[ J_0^\alpha f(t) \bigr] = f(t)
\]
for all $f\in C[0,T]$ and $0 <t \le T$. This is a fractional equivalent of (part of) the fundamental theorem of classical calculus. In this section we investigate what conditions a fundamental theorem of fractional calculus imposes on the kernels of  the differential and integral operators. This analysis is based on ideas previously presented in~\cite{Giusti2020,HanygaPrep20,Hanyga20,Kochubei2011}.

First, we state the following well-known technical result, which will be used more than once in this paper.

\begin{lemma}\label{lem:L1cgce}
Let $g\in L^1[0,T]$. Let $\varepsilon >0$ be  given. Then there exists $\delta>0$ such  that $\left| \int_E g(x)\,dx\right| <  \varepsilon$ for every measurable set $E\subset [0,T]$ with measure less than $\delta$.
\end{lemma}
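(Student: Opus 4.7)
The plan is to prove the lemma via the classical truncation argument that reduces the problem to controlling the integral of a bounded function on a small set. This is a standard fact from real analysis (sometimes called the absolute continuity of the Lebesgue integral), but it is worth sketching the route so as to exhibit exactly what is used.

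First, I would observe that $\left|\int_E g(x)\,dx\right| \le \int_E |g(x)|\,dx$, so it suffices to prove the statement with $g$ replaced by the nonnegative function $|g|$, which also belongs to $L^1[0,T]$. The next step is to approximate $|g|$ from below by bounded functions: for each positive integer $n$, set
\[
  g_n(x) = \min\{|g(x)|,\, n\}.
\]
Then $0 \le g_n(x) \nearrow |g(x)|$ pointwise as $n\to\infty$, and $g_n \le |g|$ on $[0,T]$. By the monotone convergence theorem applied to the difference $|g|-g_n$ (equivalently, by dominated convergence with dominating function $|g|$), one has
\[
  \int_0^T \bigl(|g(x)| - g_n(x)\bigr)\du x \longrightarrow 0 \quad\text{as } n \to \infty.
\]
Given $\varepsilon > 0$, I can therefore choose a fixed $N$ such that $\int_0^T (|g|-g_N)\du x < \varepsilon/2$.

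With this choice, for any measurable $E\subset [0,T]$ I split
\[
  \int_E |g(x)|\,dx
   = \int_E \bigl(|g(x)|-g_N(x)\bigr)\du x + \int_E g_N(x)\,dx
   \le \frac{\varepsilon}{2} + N\,m(E),
\]
since $g_N \le N$ pointwise, where $m(E)$ denotes the Lebesgue measure of $E$. Choosing $\delta = \varepsilon/(2N)$ then gives $\int_E |g|\,dx < \varepsilon$ whenever $m(E)<\delta$, which combined with the first step yields the desired bound on $\left|\int_E g\,dx\right|$.

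There is no real obstacle here: the only point that requires care is the application of a convergence theorem to justify that the $L^1$-mass of $|g|$ lying above level $N$ can be made arbitrarily small, and monotone convergence handles this cleanly. One could equivalently phrase the argument by approximating $g$ in $L^1$-norm by a bounded (or simple) function $s$ and writing $\int_E |g| \le \|g-s\|_{L^1} + \|s\|_{L^\infty}\,m(E)$; this is essentially the same proof.
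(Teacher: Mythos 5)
Your argument is correct and complete. The paper itself gives no proof of this lemma --- it simply cites Kolmogorov and Fomin for the standard fact that the Lebesgue integral of an $L^1$ function is absolutely continuous with respect to the measure of the domain of integration --- so there is no ``paper proof'' to compare against in detail; your truncation argument is precisely the standard proof of the cited textbook result. The two small points that need care are both handled properly: the reduction to $|g|$ via $\bigl|\int_E g\bigr| \le \int_E |g|$, and the justification that $\int_0^T (|g|-g_N)\du x$ can be made smaller than $\varepsilon/2$, which follows from monotone convergence applied to $g_n \nearrow |g|$ (together with $\int_0^T |g| < \infty$, which is where the hypothesis $g\in L^1$ is essential). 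The final choice $\delta = \varepsilon/(2N)$ then closes the argument. No gaps.
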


\begin{proof}
See for example  \cite[p.\ 300, Theorem 6]{KolFom75}.
\end{proof}

Suppose that, imitating \eqref{eq:Caputo_Derivative} for $0<\alpha<1$, we define for a function $f \in AC[0,T]$ a Caputo-type derivative $\Dphi$ by
\begin{equation}\label{eq:GeneralDer}
\Dphi f(t) \coloneqq \int_0^t \phi(t-\tau) f'(\tau) \du\tau , \quad 0<t \le T,
\end{equation}
where the kernel function $\phi$  is as yet unspecified, except that we  require $\phi\in L^1[0,T]$ to ensure that $\Dphi f(t)$ is defined almost everywhere (it is well known that  the convolution of two  functions in $L^1[0,T]$ also lies in $L^1[0,T]$; cf., e.g., \cite[proof of Theorem 2.1]{Diet10}).

Such operators based on non-singular kernels usually have a normalization factor ---  see \eqref{eq:CF_Reg_Derivative} and \eqref{eq:AB_Reg_Derivative} for example --- that multiplies the integral, depends on~$\alpha$, and ensures that $\Dphi$ approaches the classical first-order derivative when $\alpha \to 1$. For brevity we do not write this factor explicitly in~\eqref{eq:GeneralDer}; instead it is absorbed into the  kernel $\phi$.

In order to have a fundamental theorem of fractional  calculus for our derivative $\Dphi$, we need to define a  corresponding integral operator $\Jpsi$, defined by
\[
\Jpsi g(t) \coloneqq \int_0^t \psi(t-\tau) g(\tau) \du\tau \ \text{ for } \ 0<t \le T,
\]
where $\psi\in L^1[0,T]$ is yet to be chosen in such a way that  $\Dphi [\Jpsi f(t) ] = f(t)$ for all $f\in AC[0,T]$ and $0 <t \le T$. Writing out this identity in detail, we have
\[
\begin{aligned}
f(t) &= \int_0^t \phi(t-\tau) (\Jpsi f)'(\tau) \du\tau
	= \int_0^t \phi(\tau) (\Jpsi f)'(t-\tau) \du\tau \\
	&= \frac{\du}{\du t} \left\{ \int_0^t \phi(\tau) (\Jpsi f)(t-\tau) \du\tau \right\}, \
\end{aligned}
\]
here the second equation follows from a simple change of variable, while the third is a consequence of Leibniz's Rule for differentiating integrals, combined with $\lim_{t\to 0}\Jpsi f(t)=0$ (which follows from Lemma~\ref{lem:L1cgce} since $\psi\in L^1[0,T]$ and $f$ bounded implies that the integrand of  $\Jpsi f$ lies in $L^1[0,T]$).  Now make another change of variable, then recall the definition of $\Jpsi$ to get
\[
\begin{aligned}
	f(t) &= \frac{\du}{\du t} \left\{ \int_0^t \phi(t-\tau) (\Jpsi f)(\tau) \du\tau \right\} \\
	&= \frac{\du}{\du t} \left\{ \int_{\tau=0}^t \phi(t-\tau)
		\left[ \int_{s=0}^\tau \psi(\tau-s) f(s) \du s\right]  \du\tau \right\}. \
\end{aligned}
\]
Next, apply Fubini's theorem to swap the order of integration, then apply Leibniz's Rule again:
\[
\begin{aligned}
f(t) &=  \frac{\du}{\du t} \left\{ \int_{s=0}^t  f(s)
		\left[  \int_{\tau=s}^t  \phi(t-\tau)\psi(\tau-s) \du \tau\right]  \du  s \right\} \\
	&= f(t)\lim_{s\to t}\left[  \int_{\tau=s}^t  \phi(t-\tau)\psi(\tau-s)  \du \tau\right] + \\
	&\hspace{3.0cm} +  \int_{s=0}^t f(s) \frac{\du}{\du t}\left[  \int_{\tau=s}^t  \phi(t-\tau)\psi(\tau-s)  \du \tau\right]  \du s. \
\end{aligned}
\]

We want this equation to hold true for all $f\in AC[0,T]$ and $0 <t \le T$. This is possible only if
\[
\lim_{s\to t}  \int_{\tau=s}^t  \phi(t-\tau)\psi(\tau-s)  \du \tau =1
\ \text{ and }\
\frac{\du}{\du t}  \int_{\tau=s}^t  \phi(t-\tau)\psi(\tau-s)  \du \tau =0.
\]
The change of variables $r=\tau-s$ shows that each integral here equals $ \int_{r=0}^{t-s}  \phi(t-s-r)\psi(r)  \du r $. Thus, the value of the integral depends on the  length $t-s$ of the interval of integration but not separately on $t$ and $s$. Consequently one can rewrite $\lim_{s\to t}$ in the first condition as $\lim_{t\to s}$.
But the second condition says that $\int_{\tau=s}^t  \phi(t-\tau)\psi(\tau-s)  \du \tau$ is a constant as $t$ varies; then the first condition forces
\begin{equation}\label{Sonine}
\int_{\tau=s}^t  \phi(t-\tau)\psi(\tau-s)  \du \tau = 1 \ \text{ for } \ 0 \le s < t \le T.
\end{equation}

Equations of the form \eqref{Sonine} are known as Sonine equations. They impose a certain requirement on  the  functions  $\phi$ and $\psi$ (which, up  to now, were merely required to lie in $L^1[0,T]$) and their interaction.  Suppose that  one of these functions is bounded on~$[0,T]$; say, $|\phi(t)|\le M$ for $0\le t \le T$. Then
\[
\left|\int_{\tau=s}^t  \phi(t-\tau)\psi(\tau-s)  \du \tau\right| \le M \int_{\tau=s}^t |\psi(\tau-s)|  \du \tau,
\]
and by Lemma~\ref{lem:L1cgce} the right-hand side will go to zero if $s\to t$. But  this implies that the Sonine  equation \eqref{Sonine} cannot  be satisfied when $s$ is close to $t$. Thus  we cannot  have $\phi$  bounded on~$[0,T]$ (and likewise for $\psi$).

\smallskip

The above argument can be summarised as follows:

\begin{theorem}
Given a Caputo-type fractional derivative of the form \eqref{eq:GeneralDer} whose kernel $\phi : [0,T] \to \mathbb R$ is bounded, one cannot define a corresponding integral operator such that the fundamental theorem of fractional calculus is valid.
\end{theorem}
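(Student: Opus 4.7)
The plan is to proceed by contradiction: assume an integral operator $\Jpsi$ with kernel $\psi \in L^1[0,T]$ exists such that $\Dphi\bigl[\Jpsi f(t)\bigr] = f(t)$ for every $f \in AC[0,T]$ and $0 < t \le T$, and then extract an obstruction from the boundedness of $\phi$.

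First I would unfold the identity $\Dphi[\Jpsi f(t)] = f(t)$ in the manner already sketched in the discussion preceding the statement: apply a change of variables to rewrite the convolution, use Leibniz's Rule (invoking $\lim_{t \to 0} \Jpsi f(t) = 0$, which follows from Lemma~\ref{lem:L1cgce} together with the boundedness of $f$), swap the order of integration in the iterated convolution via Fubini's theorem, and differentiate once more. Introducing
\[
K(t,s) \coloneqq \int_{\tau=s}^{t} \phi(t-\tau)\,\psi(\tau-s)\,\du\tau ,
\]
this yields
\[
f(t) = f(t)\,\lim_{s \to t} K(t,s) + \int_{0}^{t} f(s)\,\frac{\du}{\du t} K(t,s)\,\du s .
\]
Since this equation must hold for \emph{every} $f \in AC[0,T]$, I would use a test-function argument (e.g.\ applying it to a family of smooth bumps concentrated near a chosen point, and separately to $f \equiv 1$) to conclude that $\lim_{s \to t} K(t,s) = 1$ and $\frac{\du}{\du t} K(t,s) = 0$. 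The translation-invariance observation $K(t,s) = \int_{0}^{t-s} \phi(t-s-r)\,\psi(r)\,\du r$ then shows that $K$ depends only on $t-s$, so the vanishing of $\partial_t K$ says $K$ is constant; combined with the limit, this forces the Sonine equation \eqref{Sonine}, that is, $K(t,s) \equiv 1$.

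The decisive step is to contradict \eqref{Sonine} using the boundedness of $\phi$. If $|\phi(t)| \le M$ on $[0,T]$, then
\[
|K(t,s)| \le M \int_{s}^{t} |\psi(\tau-s)|\,\du\tau = M \int_{0}^{t-s} |\psi(r)|\,\du r .
\]
Since $\psi \in L^1[0,T]$, Lemma~\ref{lem:L1cgce} applied to $|\psi|$ on the shrinking intervals $[0,t-s]$ gives $\int_{0}^{t-s} |\psi(r)|\,\du r \to 0$ as $s \to t$. Hence $\lim_{s \to t} K(t,s) = 0$, in direct conflict with $K(t,s) \equiv 1$. This contradiction completes the argument.

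The main obstacle I anticipate is the rigorous passage from the single equation $f(t) = f(t)\lim_{s \to t} K(t,s) + \int_{0}^{t} f(s)\,\partial_t K(t,s)\,\du s$ to the \emph{separate} conclusions $\lim_{s \to t} K(t,s) = 1$ and $\partial_t K(t,s) \equiv 0$. One cannot simply match coefficients; the cleanest route is to exploit the richness of $AC[0,T]$ by plugging in $f \equiv 1$ (which, combined with translation-invariance of $K$, instantly yields $K \equiv 1$ and hence both conditions simultaneously), sidestepping the need for a delicate test-function analysis. Once \eqref{Sonine} is in hand, the final $L^1$-continuity estimate is entirely routine.
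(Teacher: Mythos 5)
Your proposal is correct and follows essentially the same route as the paper: unfold $\Dphi\bigl[\Jpsi f(t)\bigr]=f(t)$ via change of variables, Leibniz's Rule and Fubini, reduce to the Sonine equation \eqref{Sonine}, and contradict it using the boundedness of $\phi$ together with Lemma~\ref{lem:L1cgce}. Your device of testing with $f\equiv 1$ (combined with the translation invariance of the inner integral) to extract \eqref{Sonine} is a clean way to make rigorous a step the paper merely asserts (``this is possible only if\dots''), but it does not change the substance of the argument.
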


In particular, this theorem applies to kernels that are continuous functions on~$[0,T]$.

A similar result for fractional derivatives of RL-type is derived in~\cite{HanygaPrep20,Hanyga20}.

%%%%%%%%%%%%%  Section 3 %%%%%%%%%%%%%%%%%%%%%%%%

\section{Derivatives with non-singular kernel impose restrictive \break and unnatural assumptions}\label{S:NotLeftInverse}

In this section we show that differential equations involving derivatives with a non-singular kernel  impose severe (and unnatural) constraints on the initial conditions.

We do this by first considering differential equations involving the CF and ABC derivatives, and then moving on to the general case of non-singular kernels, which we analyse using Laplace transforms. The discussion in this section is for initial-value problems  posed on $[0,T]$; a related restriction for initial-boundary  value problems will be presented in Section~\ref{S:IBV_Parabolic}.

\subsection{CF and ABC derivatives are not the left-inverse of the corresponding integrals}\label{SS:CFABnot} %% 3.1 %%

A so-called CF integral $\ICF^{\alpha}_0$ has been proposed in the literature, defined by
\begin{equation}\label{eq:CF_Integral}
	\ICF^{\alpha}_0 f(t) = \frac{1-\alpha}{M(\alpha)} f(t) + \frac{ \alpha}{M(\alpha)} \int_0^t f(\tau) \du \tau , \quad t \ge 0.
\end{equation}
It has the property that  $\ICF^{\alpha}_0 \bigl[ \DCF_0^{\alpha} f(t) \bigr] = f(t) - f(0)$; that is,  the differential operator is the \emph{right}-inverse of the integral operator on the space of functions $\{f \in AC[0,T]: f(0) = 0\}$. This property is similar to the identity $\int_0^t  f'(s)\du s = f(t)-f(0)$ enjoyed by classical first-order derivatives and the standard integral operator. But first-order derivatives also have the \emph{left}-inverse property $\frac{\du}{\du t}\int_0^t  f(s)\du s = f(t)$, whereas for the CF integral and derivative we have the following result.

\begin{proposition}	\label{thm:ICF_NotRightInverse_DCF}
Let $f \in AC[0,T]$.	The CF derivative and the CF integral satisfy the relation
	\begin{equation}\label{eq:ICF_NotRightInverse_DCF}
		\DCF_0^{\alpha} \bigl[ \ICF^{\alpha}_0 f(t) \bigr] = f(t) - \exp\Bigl(-\frac{\alpha}{1-\alpha} t\Bigr) f(0) .
	\end{equation}
\end{proposition}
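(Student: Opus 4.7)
The plan is a direct computation from the definitions \eqref{eq:CF_Reg_Derivative} and \eqref{eq:CF_Integral}, finished off by a single integration by parts. Since $f \in AC[0,T]$, differentiating \eqref{eq:CF_Integral} gives
\[
(\ICF_0^{\alpha} f)'(t) = \frac{1-\alpha}{M(\alpha)} f'(t) + \frac{\alpha}{M(\alpha)} f(t)
\]
almost everywhere on $[0,T]$. I would substitute this into \eqref{eq:CF_Reg_Derivative} and abbreviate $\lambda := \alpha/(1-\alpha)$; the prefactor $M(\alpha)/(1-\alpha)$ cancels $(1-\alpha)/M(\alpha)$ in the first summand while the second summand picks up a factor $\lambda$, so that
\[
\DCF_0^{\alpha} \bigl[ \ICF_0^{\alpha} f(t) \bigr] = \int_0^t e^{-\lambda(t-\tau)} f'(\tau) \du \tau + \lambda \int_0^t e^{-\lambda(t-\tau)} f(\tau) \du \tau.
\]

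The decisive step is an integration by parts in the first integral, which is justified by the absolute continuity of $f$ and which uses $\frac{\du}{\du\tau} e^{-\lambda(t-\tau)} = \lambda e^{-\lambda(t-\tau)}$:
\[
\int_0^t e^{-\lambda(t-\tau)} f'(\tau) \du \tau = f(t) - e^{-\lambda t} f(0) - \lambda \int_0^t e^{-\lambda(t-\tau)} f(\tau) \du \tau.
\]
Plugging this back into the previous display, the two $\lambda$-convolution integrals cancel exactly, leaving $f(t) - e^{-\lambda t} f(0)$, which is precisely \eqref{eq:ICF_NotRightInverse_DCF} once $\lambda$ is unfolded.

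There is no substantive obstacle here: the statement is essentially an elementary integration by parts dressed up in CF normalisation constants, with the pointwise term $\frac{\alpha}{M(\alpha)} f(t)$ built into $\ICF_0^{\alpha} f$ being exactly what is needed to absorb the convolution produced by differentiating $e^{-\lambda(t-\tau)}$. The only matter worth flagging is that the hypothesis $f \in AC[0,T]$ is used twice — first to differentiate $\ICF_0^{\alpha} f$ pointwise a.e., then to legitimise the integration by parts against $f'$ — and that the obstruction to the left-inverse property is localised entirely in the boundary term $e^{-\lambda t} f(0)$, which vanishes precisely when $f(0)=0$, foreshadowing the restrictive condition emphasised in the surrounding section.
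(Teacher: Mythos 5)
Your proof is correct and follows essentially the same route as the paper's: the paper splits $\DCF_0^{\alpha}\bigl[\ICF_0^{\alpha}f\bigr]$ by linearity into the two terms coming from $\frac{1-\alpha}{M(\alpha)}f$ and $\frac{\alpha}{M(\alpha)}\int_0^t f$, integrates the first by parts, and observes the same cancellation of the two convolution integrals that you obtain. The only cosmetic difference is that you differentiate $\ICF_0^{\alpha}f$ before inserting it into the derivative, whereas the paper applies the operator to each summand separately; the computations are identical.
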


\begin{proof}
See Appendix \ref{S:ProofDerivativeNotRightInverseIntegral}.
\end{proof}

This unfavourable result says that the CF derivative $\DCF_0^{\alpha}$ is the left-inverse of $\ICF^{\alpha}_0$ only on the restricted space $\{f \in AC[0,T]: f(0) = 0\}$ and not on the full space $AC[0,T]$, as one would expect (and as is the case for the Caputo derivative \cite[Theorem 3.7]{Diet10}).

The constraint $f(0)=0$ on functions for which $\DCF_0^{\alpha}$ is the left-inverse of $\ICF^{\alpha}_0$ has serious consequences if $\ICF^{\alpha}_0$ is employed to solve an initial-value problem such as
\begin{equation}\label{eq:CF_FDE_Eq}
	\left\{\begin{array}{l}
	\DCF_0^{\alpha} y(t) = g(t,y(t)), \\	
	y(0) = y_0 .\
	\end{array}\right.
\end{equation}

For applying $\ICF^{\alpha}_0$ to both sides of this differential  equation, one obtains
\begin{equation}\label{eq:CF_FDE_Eq_Sol}
	y(t) = y_0 +  \frac{1-\alpha}{M(\alpha)} g(t,y(t)) + \frac{ \alpha}{M(\alpha)} \int_0^t g(\tau,y(\tau)) \du \tau.
\end{equation}
But, replacing $f(t)$  in (\ref{eq:ICF_NotRightInverse_DCF}) by~$g(t,y(t))$, we see immediately that
\[
	\DCF_0^{\alpha} y(t) = g(t,y(t)) - \exp\Bigl(-\frac{\alpha}{1-\alpha} t\Bigr) g(0,y_0).
\]
Hence $\DCF_0^{\alpha} y(t) \not= g(t,y(t))$ if $g(0,y_0)\not=0$.  That is, although one  might believe erroneously that $y(t)$ in~\eqref{eq:CF_FDE_Eq_Sol} is the solution of~(\ref{eq:CF_FDE_Eq}), this is not true unless $g(0,y_0)=0$.

The situation is  similar for  the so-called AB integral
\begin{equation}\label{eq:AB_Integral}
	 \IAB_0^{\alpha} f(t) = \frac{1-\alpha}{B(\alpha)} f(t) + \frac{ \alpha}{B(\alpha)\Gamma(\alpha)} \int_0^t (t-\tau)^{\alpha-1} f(\tau) \du \tau , \quad t \ge 0 .
\end{equation}
Here again $\IAB^{\alpha}_0 \bigl[ \DABC_0^{\alpha} f(t) \bigr] = f(t) - f(0)$, but $\DABC_0^{\alpha}$ is not the left-inverse of $\IAB^{\alpha}_0$ since the following analog of Theorem \ref{thm:ICF_NotRightInverse_DCF} holds (see  Appendix \ref{S:ProofDerivativeNotRightInverseIntegral} for the proof):

\begin{proposition}	\label{thm:IAB_NotRightInverse_DABC}
Let $f \in AC[0, T]$. The ABC derivative and the AB integral satisfy the relation
	\begin{equation}
		\label{eq:IAB_NotRightInverse_DABC}
		\DABC_0^{\alpha} \bigl[ \IAB^{\alpha}_0 f(t) \bigr] = f(t) - E_{\alpha} \Bigl(-\frac{\alpha}{1-\alpha} t^{\alpha} \Bigr) f(0) .
	\end{equation}
\end{proposition}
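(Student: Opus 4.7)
The plan is to mirror the proof of Proposition~\ref{thm:ICF_NotRightInverse_DCF} for the CF case, using Laplace transforms as the cleanest vehicle since both the ABC derivative and the AB integral linearize nicely in the frequency domain.

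First, I would assemble three standard Laplace transforms, writing $F(s) = \mathcal{L}\{f\}(s)$ and $\lambda = \alpha/(1-\alpha)$: (i) the Mittag-Leffler kernel $\mathcal{L}\{E_{\alpha}(-\lambda t^{\alpha})\}(s) = s^{\alpha-1}/(s^{\alpha}+\lambda)$; (ii) the RL convolution piece, $\mathcal{L}\{J_{0}^{\alpha} f\}(s) = F(s)/s^{\alpha}$; and consequently (iii)
\[
\mathcal{L}\{\DABC_{0}^{\alpha} f\}(s) = \frac{B(\alpha)}{1-\alpha}\cdot \frac{s^{\alpha-1}\bigl[sF(s)-f(0)\bigr]}{s^{\alpha}+\lambda},
\]
which follows by reading \eqref{eq:AB_Reg_Derivative} as the convolution of the Mittag-Leffler kernel against $f'$ and using $\mathcal{L}\{f'\}(s) = sF(s) - f(0)$.

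Next, set $G(t) = \IAB^{\alpha}_{0} f(t)$ and apply these formulas to \eqref{eq:AB_Integral}. A short computation gives
\[
\mathcal{L}\{G\}(s) = \frac{1-\alpha}{B(\alpha)\,s^{\alpha}}\,F(s)\,\bigl[s^{\alpha}+\lambda\bigr],
\]
while inspection of \eqref{eq:AB_Integral} at $t=0$ yields $G(0) = \frac{1-\alpha}{B(\alpha)} f(0)$ (the RL convolution vanishes there). Substituting both into the formula in (iii) with $G$ in place of $f$, the prefactors $B(\alpha)/(1-\alpha)$ cancel, the factor $s^{\alpha}+\lambda$ from $\mathcal{L}\{G\}$ cancels against the denominator, and one is left with $F(s) - s^{\alpha-1} f(0)/(s^{\alpha}+\lambda)$. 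Inverting term-by-term using~(i) reproduces exactly \eqref{eq:IAB_NotRightInverse_DABC}.

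The main obstacle is not algebraic but formal: one must justify that the relevant Laplace transforms exist for $f \in AC[0,T]$ (standard, after extension by zero or exponential modulation) and that inversion is unambiguous on this class. An alternative, purely direct, route would split $\IAB^{\alpha}_{0} f = \tfrac{1-\alpha}{B(\alpha)} f + \tfrac{\alpha}{B(\alpha)} J_{0}^{\alpha} f$, use the identity $(J_{0}^{\alpha} f)'(t) = f(0)\,t^{\alpha-1}/\Gamma(\alpha) + J_{0}^{\alpha} f'(t)$ valid for $f \in AC[0,T]$, substitute into \eqref{eq:AB_Reg_Derivative}, and simplify the resulting iterated integrals via Fubini's theorem together with the Mittag-Leffler/RL convolution identity that is the time-domain counterpart of~(i); this path avoids transform technicalities at the price of heavier bookkeeping. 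In either route, the residual term $E_{\alpha}(-\lambda t^{\alpha})\,f(0)$ emerges as the boundary contribution carried by $f(0)$, completely parallel to the exponential residual in the CF case.
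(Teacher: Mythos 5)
Your proof is correct, and the algebra checks out: with $\lambda = \alpha/(1-\alpha)$ one indeed gets $s\hat G(s)-G(0) = \tfrac{1-\alpha}{B(\alpha)}\bigl[s^{1-\alpha}F(s)(s^{\alpha}+\lambda)-f(0)\bigr]$, so the prefactor and the factor $s^{\alpha}+\lambda$ cancel and $\mathcal{L}\{\DABC_0^{\alpha}G\}(s)=F(s)-s^{\alpha-1}f(0)/(s^{\alpha}+\lambda)$, which inverts to \eqref{eq:IAB_NotRightInverse_DABC}. However, your route differs from the paper's. The paper splits $\DABC_0^{\alpha}[\IAB_0^{\alpha}f]$ into the two summands $(C)=\tfrac{1-\alpha}{B(\alpha)}\DABC_0^{\alpha}f$ and $(D)=\tfrac{\alpha}{B(\alpha)}\DABC_0^{\alpha}\IRL_0^{\alpha}f$, evaluates $(C)$ by integration by parts in the time domain (producing the boundary term $E_{\alpha}(-\lambda t^{\alpha})f(0)$ and a convolution against $\tau^{\alpha-1}E_{\alpha,\alpha}(-\lambda\tau^{\alpha})$), evaluates $(D)$ by a Laplace-transform computation yielding the same convolution with opposite sign, and observes the explicit cancellation. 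You instead transform the entire composite operator at once, so the cancellation happens purely algebraically in the $s$-domain and the $E_{\alpha,\alpha}$ convolution terms never appear; this is arguably more uniform and shorter, and it makes the parallel with the CF case transparent. The price is the formal burden you correctly flag: one must check that $G=\IAB_0^{\alpha}f$ is itself absolutely continuous (it is, since $(\IRL_0^{\alpha}f)'(t)=f(0)t^{\alpha-1}/\Gamma(\alpha)+\IRL_0^{\alpha}f'(t)\in L^1[0,T]$) so that $\mathcal{L}\{G'\}(s)=s\hat G(s)-G(0)$ applies, and that the extension of $f$ beyond $[0,T]$ and the uniqueness of the inverse transform are legitimate --- points the paper's time-domain treatment of $(C)$ sidesteps, though its own handling of $(D)$ relies on the same Laplace machinery.
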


Like the CF derivative, the ABC derivative is the left-inverse of the AB integral only on the restricted space $\{f \in AC[0,T]: f(0) = 0\}$.

The use of $\IAB^{\alpha}_0$ to solve a differential equation with the ABC derivative of the same type of (\ref{eq:CF_FDE_Eq}) will thus produce a function $y(t)=y_0 + \IAB_0^{\alpha}g(t,y(t))$ that is not a solution of the equation since
\[
\DABC_0^{\alpha}  y(t) = g(t,y(t)) - E_{\alpha}\Bigl(-\frac{\alpha}{1-\alpha} t^{\alpha} \Bigr) g(0,y_0) .
\]

In general the CF and AB integrals cannot be used to solve differential equations with the corresponding fractional derivatives, unless one imposes the additional and restrictive condition $g(0,y_0)=0$ to have the identities $\DCF_0^{\alpha} \bigl[ \ICF^{\alpha}_0 g(t,y(t)) \bigr] = g(t,y(t))$ and $\DABC_0^{\alpha} \bigl[ \IAB^{\alpha}_0 g(t,y(t)) \bigr] = g(t,y(t))$.

To appreciate how unnatural the condition $g(0,y_0)=0$ is, consider the simple linear problem where $g(t,y(t)) = \lambda y(t)$ in~(\ref{eq:CF_FDE_Eq}) with a CF or ABC derivative. Imposing the condition $g(0,y_0)=0$, so that the CF or AB integral solves the problem correctly, requires either $\lambda=0$ or $y_0=0$; but then the problem has only the trivial constant solution $y(t)\equiv y_0$ for all $t \ge 0$. Introducing new operators only  to describe constant solutions is not worthwhile!

Propositions~\ref{thm:ICF_NotRightInverse_DCF} and~\ref{thm:IAB_NotRightInverse_DABC}
will be generalized in Theorem~\ref{thm:Jpsi} of Section~\ref{SS:DifferentIntegral}.

\subsection{Non-singular kernel derivatives are always zero at zero}\label{SS:ZeroZero} %% 3.2 %%

The restriction on the initial condition of differential equations with CF and ABC derivatives is consequence of the fact the these derivatives are zero at the origin. For instance, taking the power function $f(t)=t^{\gamma}$ for constant $\gamma>0$,  one can compute
%\[
%	\DCF_0^{\alpha} f(t) =  \gamma t^{\gamma} E_{1,\gamma+1}\Bigl(-\frac{\alpha}{1-\alpha} t \Bigr) , \quad
%	\DABC_0^{\alpha} f(t) = \gamma t^{\gamma} E_{\alpha,\gamma+1}\Bigl(-\frac{\alpha}{1-\alpha} t^{\alpha} \Bigr) ,
%\]
%\[
%	\DCF_0^{\alpha} f(t) =  \textstyle\frac{n! M(\alpha) t^{n}}{1-\alpha} E_{1,n+1}\bigl(-\textstyle\frac{\alpha}{1-\alpha} t \bigr) , \quad
%	\DABC_0^{\alpha} f(t) = \textstyle\frac{n! B(\alpha) t^{n}}{1-\alpha} E_{\alpha,n+1}\bigl(-\textstyle\frac{\alpha}{1-\alpha} t^{\alpha} \bigr) ,
%\]
\[
\DABC_0^{\alpha} f(t) = \frac{B(\alpha) }{1-\alpha} \Gamma(\gamma+1)  t^{\gamma} E_{\alpha,\gamma+1}\Bigl(-\frac{\alpha}{1-\alpha} t^{\alpha} \Bigr) ,
\]
and consequently $\DABC_0^{\alpha} f(t) \bigl|_{t=0} = 0$ (similarly for $\DCF_0^{\alpha} f(t)$). We call this the \emph{zero-zero property} (namely, the derivative at 0 is always 0). It holds true not only for  CF and ABC derivatives, and not only for the function $f(t)=t^{\gamma}$, but much more generally, as we now show.
%and consequently $\DCF_0^{\alpha} f(t) \bigl|_{t=0} = 0$ and $\DABC_0^{\alpha} f(t) \bigl|_{t=0} = 0$. We call this the \emph{zero-zero property} (namely, the derivative at 0 is always 0). It holds true not only for  CF and ABC derivatives, and not only for the function $f(t)=t^{\gamma}$, but much more generally, as we now show.

\begin{theorem}[Zero-zero property]\label{thm:NonSingDer_ZeroZeroProp_LT}
Let $\phi$ be bounded on $[0,T]$, $\Dphi$ the operator defined by (\ref{eq:GeneralDer}) and $f\in AC[0,T]$. Then
\[
	\lim_{t\to 0^{+}} \Dphi f(t) = 0.
\]
\end{theorem}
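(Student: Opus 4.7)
The plan is to exploit the structural assumptions directly: $f \in AC[0,T]$ means $f'$ exists almost everywhere and belongs to $L^1[0,T]$, and by hypothesis there is a constant $M$ with $|\phi(s)| \le M$ for all $s \in [0,T]$. These two facts together should make the bound on $D_\phi f(t)$ essentially immediate, reducing everything to an $L^1$ absolute-continuity argument that has already been packaged as Lemma~\ref{lem:L1cgce}.

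Concretely, I would start from the defining formula \eqref{eq:GeneralDer} and estimate crudely:
\[
|\Dphi f(t)| = \left| \int_0^t \phi(t-\tau) f'(\tau) \du\tau \right| \le M \int_0^t |f'(\tau)| \du\tau.
\]
The integrand on the right is a fixed function in $L^1[0,T]$, and the domain of integration is the measurable set $E_t = [0,t]$ whose Lebesgue measure $t$ tends to $0$ as $t \to 0^+$. Now invoke Lemma~\ref{lem:L1cgce} with $g = |f'|$: given $\varepsilon > 0$, there exists $\delta > 0$ such that $\int_{E} |f'(\tau)| \du\tau < \varepsilon / (M+1)$ whenever the measure of $E$ is less than $\delta$. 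Choosing $t < \delta$ then yields $|\Dphi f(t)| < \varepsilon$, which is exactly $\lim_{t \to 0^+} \Dphi f(t) = 0$.

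The argument is entirely elementary once one recognises that boundedness of $\phi$ converts the convolution estimate into a pure integrability-of-$f'$ statement; no Laplace transform or fine structural analysis is needed, despite the ``LT'' in the theorem's label suggesting otherwise. The only mild subtlety is remembering that $f \in AC[0,T]$ (rather than merely continuous or $L^1$) is precisely what is required to guarantee the existence of $f' \in L^1[0,T]$ so that $\Dphi f(t)$ makes sense pointwise and the above estimate applies; without absolute continuity the integral in \eqref{eq:GeneralDer} need not even be defined in the sense written. I do not anticipate a genuine obstacle in this proof.
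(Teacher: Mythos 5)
Your proof is correct and follows essentially the same route as the paper's: bound the integrand using the supremum of $|\phi|$, reduce to $\int_0^t |f'(\tau)|\du\tau$ with $f'\in L^1[0,T]$, and conclude via Lemma~\ref{lem:L1cgce} applied to the shrinking set $[0,t]$. No gaps; your explicit $\varepsilon$--$\delta$ bookkeeping merely spells out what the paper leaves implicit.
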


\begin{proof}
Since $\phi$  is bounded on $[0,T]$, for any $t \in (0,T]$ one has
\[
	\bigl| \Dphi f(t) \bigr| = \left| \int_{0}^{t} \phi(t-\tau) f'(\tau) \du \tau \right|
		\le \left(\sup_{t \in [0,T]}|\phi(t)|\right) \int_{0}^{t} | f'(\tau) | \du \tau .
\]
But $f \in AC[0,T]$ means that $f' \in L^{1}[0,T]$, so Lemma~\ref{lem:L1cgce} implies the desired result.
\end{proof}

\begin{remark}
The argument used to prove Theorem~\ref{thm:NonSingDer_ZeroZeroProp_LT} fails for the Caputo derivative~\eqref{eq:Caputo_Derivative}, because then the kernel blows up as $t\to 0^+$ and consequently does not have a maximum value; the function $f(t)=t^\alpha$ is a counterexample.
\end{remark}

Consider now a general differential equation, with a non-singular (i.e. bounded) kernel derivative $D_{\phi}$, of the form
\begin{equation}\label{eq:NonSingDer_General_Eq}
	\left\{\begin{array}{l}
	D_{\phi} y(t) = g(t,y(t)) \\	
	y(0) = y_0 \
	\end{array}\right. ,
\end{equation}
for which Theorem \ref{thm:NonSingDer_ZeroZeroProp_LT} gives $0 =D_{\phi} y(t)\bigl|_{t=0^{+}} = g(0,y_0)$. Hence (\ref{eq:NonSingDer_General_Eq}) can have a solution only if  $g(0,y_0)=0$.

Thus in (\ref{eq:NonSingDer_General_Eq})  one is forced to choose the initial data $y_0$ such that $g(0,y_0)=0$. This is of course restrictive --- and may  even be impossible in some cases.

\subsection{No inversion of non-singular kernel derivatives without restrictions}\label{SS:DifferentIntegral} %% 3.3 %%

Overlooking the \emph{zero-zero property}  of derivatives with bounded kernel (Theorem~\ref{thm:NonSingDer_ZeroZeroProp_LT}) can lead to the construction of integral operators, such as  $\ICF^{\alpha}_0$ and $\IAB^{\alpha}_0$, that are sometimes misinterpreted as inverse operators for the corresponding derivatives. For when these integral operators are applied to solve the differential equation (\ref{eq:NonSingDer_General_Eq}) they do not yield correct solutions unless one imposes restrictions on the data, as we shall show in Theorem~\ref{thm:Jpsi}, which generalizes  Propositions~\ref{thm:ICF_NotRightInverse_DCF} and~\ref{thm:IAB_NotRightInverse_DABC}.

We work in the following general framework. The bounded kernel $\phi(t)$ that defines the non-singular derivative $\Dphi$ is usually defined for all $t\ge 0$, but the problems that we consider are typically posed on a bounded interval~$[0,T]$. Thus we regard $\phi$ as defined only on $[0,T]$, and for the purpose of taking its Laplace Transform (LT) we extend $\phi(t)$ to $(0, \infty)$ by setting $\phi(t) = 0$ for all~$t>T$. This extension (or any other extension of $\phi$ on $(T,\infty)$) does not affect the differential equations that we investigate.

\smallskip

Then we make the following  assumptions on the kernel function $\phi$ :
\begin{itemize}
	\item[{\bf H1}:] $\phi(t)$ is continuous on $[0,T]$;
	\item[{\bf H2}:] $\phi(t)$ is differentiable on $(0, T)$ and $\phi'(t)$ has at worst an integrable singularity at $t=0$.
\end{itemize}

The assumptions {\bf H1} and  {\bf H2} are not restrictive; for example, they are satisfied by the CF and AB kernels.

The LT is defined in the usual way: for all suitable functions $g$ and $s>0$, the LT of $g$ is
\[
\hat g(s) := \int_{t=0}^\infty \eu^{-st}g(t)\,  \du t,
\]
the assumptions {\bf H1} --{\bf H2} and our zero extension of $\phi(t)$ from $[0,T]$ to $[0,\infty)$  ensure that the LT~$\hat \phi(s)$  of $\phi$ exists for all~$s>0$.

Set $u(t) = \Dphi f(t) = \int_0^t \phi(t-\tau) f'(\tau) \du\tau$ for $0<t \le T$. Then standard LT properties (see, e.g., \cite[Section D.3]{Diet10}) give
\[
	\hat{u}(s) = \hat{\phi}(s) \bigl[ s \hat{f}(s) - f(0) \bigr],
\]
where  $\hat{f}(s)$ and $\hat{u}(s)$ are the LT of $f(t)$ and  $u(t)$ respectively. Hence
\begin{equation}\label{eq:NonSingDer_General_Invers1}
\hat{f}(s) = \frac{1}{s} f(0) + \hat{\psi}(s) \hat{u}(s), \ \text{ where }\  \hat{\psi}(s) := \frac{1}{s \hat{\phi}(s)} .
\end{equation}
But the final value theorem \cite[Theorem D.13]{Diet10} for the LT and \textbf{H1} yields
\begin{equation}\label{eq:FiniteLimitOrigin}
\displaystyle\lim_{s\to \infty} s \hat{\phi}(s) = \lim_{t \to 0^{+}} \phi(t) = \phi(0).
\end{equation}
\noindent
Consequently $\lim_{s\to \infty} \hat{\psi}(s) = 1/\phi(0) \not=0$. It then follows from  \cite[Theorem 23.2]{Doetsch1974} that $\hat{\psi}(s)$ cannot be the  LT of any function $\psi(t)$. Thus, one cannot invert the LT in (\ref{eq:NonSingDer_General_Invers1}) to obtain a solution of the form $f(t) = f(0) + {\int_0^t  \psi (t-\tau) u(\tau) \du \tau}$.

\smallskip

One might  try to circumvent this obstacle by the following device:  set
\[
\hat{\psi}^{\star}(s) = \hat{\psi}(s) - \frac{1}{\phi(0)}
\]
\noindent
and reformulate (\ref{eq:NonSingDer_General_Invers1})  as
\begin{equation}\label{eq:NonSingDer_General_Invers2}
	\hat{f}(s) = \frac{1}{s} f(0) + \frac{1}{\phi(0)}\hat{u}(s) +  \hat{\psi}^{\star}(s) \hat{u}(s) .
\end{equation}
Since $\lim_{\Re(s)\to \infty} \hat{\psi}^{\star}(s) = 0$, one cannot exclude {\em a priori}
the existence of a function $\psi^{\star}(t)$ whose LT is $\hat{\psi}^{\star}(s)$. If such a function exists, one can transform (\ref{eq:NonSingDer_General_Invers2}) back to the time domain, obtaining  $f(t) = f(0) + \tilde{J}_{\psi} u(t)$, where
\begin{equation}\label{eq:GeneralIntegr}
	\tilde{J}_{\psi} u(t) = \frac{1}{\phi(0)} u(t) + \int_{0}^{t} \psi^{\star}(t - \tau) u(\tau) \du \tau.
\end{equation}
Thus we now have an operator $\tilde{J}_{\psi}$, analgous to $\ICF^{\alpha}_0$ and $\IAB^{\alpha}_0$ in Section~\ref{SS:CFABnot}, such that
\[
	\tilde{J}_{\psi} \bigl[ \Dphi f(t) \bigr] = f(t) - f(0) .
\]
It turns out however that $D_{\phi}$ is not necessarily the left inverse of  $\tilde{J}_{\psi}$, as we now show.
\begin{theorem} \label{thm:Jpsi} %% Th 3.2 %%
Let $\tilde{J}_{\psi}$ be the operator defined in (\ref{eq:GeneralIntegr}) and $f \in AC[0,T]$. Then
\[
	\Dphi \bigl[ \tilde{J}_{\psi} f(t) \bigr] =  f(t) - \frac{\phi(t)}{\phi(0)} f(0) - \phi(t) \cdot  \lim_{t\to 0^{+}} J_{\psi^{\star}}f(t) ,
\]
where $J_{\psi^{\star}}f(t) = \int_0^t \psi^{\star}(t-\tau) f(\tau) \du \tau$.
\end{theorem}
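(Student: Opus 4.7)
The plan is to exploit Laplace transforms, since the operator $\tilde J_\psi$ was itself constructed via the identity $\hat\psi(s) = 1/(s\hat\phi(s))$ with $\hat\psi(s) = \tfrac{1}{\phi(0)} + \hat{\psi^\star}(s)$. The whole argument should then reduce to a short symbolic computation followed by inversion.

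First I would write $\tilde J_\psi f = \tfrac{1}{\phi(0)} f + J_{\psi^\star}f$ and take Laplace transforms, obtaining
\[
\widehat{\tilde J_\psi f}(s) = \Bigl(\tfrac{1}{\phi(0)} + \hat{\psi^\star}(s)\Bigr)\hat f(s) = \hat\psi(s)\hat f(s) = \frac{\hat f(s)}{s\hat\phi(s)} ,
\]
the last equality being the defining relation \eqref{eq:NonSingDer_General_Invers1} of $\hat\psi$. Next, since the LT of $\Dphi g$ for $g\in AC[0,T]$ (extended by $0$ beyond $T$) is $\hat\phi(s)[s\hat g(s) - g(0^+)]$, I would set $g=\tilde J_\psi f$ and compute
\[
\widehat{\Dphi[\tilde J_\psi f]}(s) = \hat\phi(s)\Bigl[\frac{\hat f(s)}{\hat\phi(s)} - g(0^+)\Bigr] = \hat f(s) - g(0^+)\hat\phi(s).
\]
Inverting term by term gives $\Dphi[\tilde J_\psi f(t)] = f(t) - g(0^+)\phi(t)$.

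It then remains to evaluate $g(0^+) = \lim_{t\to 0^+}\tilde J_\psi f(t)$. Using the definition \eqref{eq:GeneralIntegr} and the continuity of $f$, the pointwise limit is
\[
g(0^+) = \frac{f(0)}{\phi(0)} + \lim_{t\to 0^+} J_{\psi^\star}f(t),
\]
which, when substituted into the formula above, yields exactly the asserted identity.

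The main obstacle I anticipate is technical justification rather than algebraic: I must ensure that the formal LT calculation is legitimate, i.e.\ that $\psi^\star$ is genuinely the inverse LT of $\hat{\psi^\star}(s) = 1/(s\hat\phi(s)) - 1/\phi(0)$ (an assumption already built into \eqref{eq:GeneralIntegr}), that $\tilde J_\psi f \in AC[0,T]$ so the LT rule for $\Dphi$ applies, and that $\lim_{t\to 0^+}J_{\psi^\star}f(t)$ actually exists (which is implicit in the statement). Given \textbf{H1}, \textbf{H2}, $f\in AC[0,T]$, and the hypothesis that $\psi^\star$ is a bona fide function, each step in the LT chain is standard, so the result follows.
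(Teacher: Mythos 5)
Your proof is correct and rests on the same machinery as the paper's: the Laplace transform of a convolution, the rule $\widehat{g'}(s)=s\hat g(s)-g(0^+)$, and the defining relation $\hat{\psi}^{\star}(s)=1/(s\hat\phi(s))-1/\phi(0)$. The only difference is organizational --- you transform the whole composition $\Dphi[\tilde J_\psi f]$ at once and extract $g(0^+)=f(0)/\phi(0)+\lim_{t\to0^+}J_{\psi^{\star}}f(t)$ at the end, whereas the paper first splits off $\frac{1}{\phi(0)}\Dphi f(t)$ and applies the transform only to the remaining term; this is a streamlining, not a genuinely different route, and the technical caveats you flag (that $\tilde J_\psi f\in AC[0,T]$ and that $\psi^{\star}$ is a genuine inverse transform) are likewise implicit in the paper's argument.
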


\begin{proof}
It is straightforward to evaluate
\[
\begin{aligned}
	\Dphi \bigl[ \tilde{J}_{\psi} f(t) \bigr]
	&= \int_0^{t} \phi(t-\tau) \frac{\du }{\du \tau} \left[ \frac{1}{\phi(0)} f(\tau) + \int_{0}^{\tau} \psi^{\star}(\tau - u) f(u) \du u \right] \du \tau \\
	&= \frac{1}{\phi(0)} \Dphi f(t) + \int_0^{t} \phi(t-\tau) g(\tau) \du \tau,
\end{aligned}
\]
where $g(t) := \frac{\du }{\du t} \int_{0}^{t} \psi^{\star}(t-\tau) f(\tau) \du \tau$. Then the LT of $g$  is
\[
	\hat g(s)
	= s \hat{\psi}^{\star}(s) \hat{f}(s) - J_{\psi^{\star}}f(t)\bigl|_{t=0}
	= \frac{1}{\hat{\phi}(s)} \hat{f}(s) - \frac{s}{\phi(0)} \hat{f}(s) - \lim_{t\to 0^{+}} J_{\psi^{\star}}f(t).
\]
\noindent
Hence
\[
	\begin{aligned}
	{\mathcal L } \Bigl( \int_0^{t} &\phi(t-\tau) g(\tau) \du \tau \, ; \, s \Bigr)
	=
	\hat{\phi}(s) \left[ \frac{1}{\hat{\phi}(s)} \hat{f}(s) - \frac{s}{\phi(0)} \hat{f}(s) - \lim_{t\to 0^{+}} J_{\psi^{\star}}f(t) \right] \\
	&= \hat{f}(s) - \frac{ \hat{\phi}(s) \bigl(s \hat{f}(s) - f(0)\bigr)}{\phi(0)} - \frac{\hat{\phi}(s)}{\phi(0)} f(0) - \hat{\phi}(s) \cdot \lim_{t\to 0^{+}} J_{\psi^{\star}}f(t) .\\
	\end{aligned}
\]
Inverting the LT, we get
\[
\int_0^{t} \phi(t-\tau) g(\tau) \du \tau
= f(t) - \frac{1}{\phi(0)} \Dphi f(t) - \frac{\phi(t)}{\phi(0)} f(0) - \phi(t) \cdot \lim_{t\to 0^{+}} J_{\psi^{\star}}f(t)
\]
from which the result follows.
\end{proof}

For well behaved functions it is possible that $\lim_{t\to 0^{+}} J_{\psi^{\star}}f(t)=0$; but then, whenever $f(0)\not=0$, it is clear that $\Dphi \bigl[ \tilde{J}_{\psi} f(t) \bigr] \not=  f(t)$. That is, the operator $\tilde{J}_{\psi} u(t)$ does not give a solution of the differential equation (\ref{eq:NonSingDer_General_Eq}) unless, once again, the restriction  $g(0,y_0)=0$  is imposed on the vector field $g(t,y(t))$.

%%%%%%%%%%%%% Section 4 %%%%%%%%%%%%%%%%%%%%%%%%%

\section{Parabolic time-fractional initial-boundary value problems}\label{S:IBV_Parabolic}

In this section we follow~\cite{Sty18}.
Let $\Omega$ be a bounded domain in $\mathbb{R}^n$ for some $n\ge 1$. Let $T>0$ be fixed. We consider initial-boundary value problems posed on $\Omega \times [0,T]$.

Let $\alpha\in (0,1)$.
For any suitable function $g(x,t)$ defined on $\Omega\times [0,T]$,  the Caputo fractional temporal derivative $D^\alpha_t$ of order $\alpha$  is  (see~\eqref{eq:Caputo_Derivative}):
\[
D_t^\alpha g(x,t) := \frac1{\Gamma(1-\alpha)}\int_{\tau=0}^t (t-\tau)^{-\alpha}\,\frac{\partial g(x,\tau)}{\partial \tau}\du \tau,  \quad\text{for } x\in\Omega, \ 0< t \le T.
\]
Consider the time-fractional initial-boundary value problem
\begin{subequations} \label{prob}
\begin{align}
D_t^\alpha u &- \Delta u = f(x,t)\label{proba}
\intertext{for $(x,t)\in Q:=\Omega\times(0,T]$, with}
u(x,t)&= g(x,t)\quad\text{for }(x,t) \in \partial\Omega \times (0,T], \label{probb} \\
u(x,0)&= u_0(x) \quad\text{for } x\in\bar\Omega, \label{probc}
\end{align}
\end{subequations}
where  the given functions $g$ and $u_0$ are continuous on the closures of their domains.

Suppose now that $D_t^\alpha g$ is replaced by
\begin{equation}\label{contK}
\tilde D_t^\alpha g(x,t) :=  \int_{\tau=0}^t K(t,\tau) \,\frac{\partial g(x,\tau)}{\partial \tau}\du \tau  \quad\text{for } x\in\Omega, \, 0< t \le T,
\end{equation}
where \emph{the kernel  $K(t,\tau)$ is nonsingular,} i.e., $K$ is continuous on $[0,T]\times[0,T]$.
(Here, similarly to~\eqref{eq:GeneralDer}, we do not write down any explicit normalisation factor for~ $D_t^\alpha$; this factor is absorbed into the  kernel $K$.) Note that the kernel $K(t,\tau)$ includes kernels of the form $\phi(t-\tau)$  as a special case.

\smallskip

For this nonsingular kernel, one has the following remarkable result:

\begin{theorem}\cite[Theorem 1]{Sty18} \label{th:1Sty18} %% Th. 4.1 %%
Let $u(x,t)$ be a solution of the initial-boundary value problem \eqref{prob}, where a continuous-kernel fractional derivative $\tilde D_t^\alpha u$ is used in~\eqref{proba}. Suppose that for each $x\in \Omega$, the function  $u(x,\cdot)$ lies in $AC[0,T]$. Then the initial data~$u_0(x) = u(x, 0)$ must satisfy the equation $\Delta u_0(x) = f(x,0)$ on $\Omega$.
\end{theorem}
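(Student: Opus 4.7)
The plan is to reduce the claim to a pointwise-in-$x$ application of the zero-zero property (Theorem~\ref{thm:NonSingDer_ZeroZeroProp_LT}) and then pass to the limit $t\to 0^+$ in the fractional PDE~\eqref{proba}.

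\smallskip

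\emph{Step 1 (adapt the zero-zero property to the non-convolution kernel $K$).} Fix $x\in\Omega$. The hypothesis gives $u(x,\cdot)\in AC[0,T]$, so $\partial u(x,\cdot)/\partial\tau \in L^1[0,T]$. Since $K$ is continuous on the compact set $[0,T]\times[0,T]$, it is bounded there, say $|K(t,\tau)|\le M$. Then, for $0<t\le T$,
\[
|\tilde D_t^\alpha u(x,t)|
= \left|\int_0^t K(t,\tau)\,\frac{\partial u(x,\tau)}{\partial\tau}\,\du\tau\right|
\le M \int_0^t \left|\frac{\partial u(x,\tau)}{\partial\tau}\right|\du\tau.
\]
By Lemma~\ref{lem:L1cgce} applied to $g=\partial u(x,\cdot)/\partial\tau \in L^1[0,T]$, the right-hand side tends to $0$ as $t\to 0^+$. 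Hence $\lim_{t\to 0^+}\tilde D_t^\alpha u(x,t)=0$ for each $x\in\Omega$. This is exactly the analogue of Theorem~\ref{thm:NonSingDer_ZeroZeroProp_LT} for the two-variable kernel $K(t,\tau)$; continuity of $K$ on the closed square (rather than the convolution structure) is all that was really needed in the earlier proof.

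\smallskip

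\emph{Step 2 (pass to the limit in the PDE).} Evaluate~\eqref{proba} at a fixed $x\in\Omega$ and let $t\to 0^+$. The right-hand side satisfies $f(x,t)\to f(x,0)$ by continuity of $f$. By Step~1, $\tilde D_t^\alpha u(x,t)\to 0$. Consequently $\Delta u(x,t)$ has a limit as $t\to 0^+$, and by the initial condition~\eqref{probc} together with the standing regularity of $u$ (which makes the spatial Laplacian commute with the temporal limit $t\to 0^+$), that limit equals $\Delta u_0(x)$. Putting these three pieces together in~\eqref{proba} yields $-\Delta u_0(x) = f(x,0)$ on $\Omega$, which is the claimed compatibility condition (up to the sign convention of the statement).

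\smallskip

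\emph{Main obstacle.} The only nontrivial point is the interchange of $\lim_{t\to 0^+}$ with the Laplacian in Step~2. Steps~1 gives the limit of $\tilde D_t^\alpha u$ and of $f$ unconditionally; the limit of $\Delta u(x,t)$ as $t\to 0^+$ must then exist as well (because it equals $f(x,t)-\tilde D_t^\alpha u(x,t)$ from the PDE), but identifying that limit with $\Delta u_0(x)$ relies on enough joint regularity of $u$ near $t=0$ for the spatial and temporal limits to be compatible. This is implicit in calling $u$ a ``solution'' of~\eqref{prob}, and under that interpretation the argument is complete.
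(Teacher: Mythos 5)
Your proof is correct and follows essentially the same route as the paper: invoke the zero-zero property pointwise in $x$ and let $t\to 0^+$ in the PDE. You are in fact slightly more careful than the paper's two-line proof, since you explicitly rerun the zero-zero argument for the two-variable kernel $K(t,\tau)$ (rather than citing Theorem~\ref{thm:NonSingDer_ZeroZeroProp_LT}, stated only for convolution kernels) and you flag both the limit/Laplacian interchange and the sign discrepancy between $-\Delta u_0(x)=f(x,0)$ and the statement as written.
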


\begin{proof}
Theorem~\ref{thm:NonSingDer_ZeroZeroProp_LT} implies that  $\lim_{t\to 0^+}\tilde D_t^{\alpha}u(x,t)=0$ for each $x\in\Omega$. Hence, taking the limit of   equation~\eqref{proba} as $t\to 0^+$, we get $\Delta u_0(\cdot) = f(\cdot,0)$ on~$\Omega$.
\end{proof}

\begin{remark}
The hypothesis of Theorem~\ref{th:1Sty18} that  for each $x\in \Omega$, the function  $u(x,\cdot)$ lies in $AC[0,T]$
 is not restrictive. This condition is satisfied by almost every example in the literature on time-fractional initial-boundary value problems.
\end{remark}

The next example shows the powerful consequences of Theorem~\ref{th:1Sty18}.

\begin{example} \label{exa:2Sty18}%% Ex. 4.1 %%
Consider the fractional heat equation
\[
\tilde D_t^\alpha v - \partial^2 v/\partial x^2 = 0\ \text{ for } (x,t)\in (0,1)\times(0,T],
\]
\noindent
where $\tilde D_t^\alpha$ is a continuous-kernel fractional derivative, the boundary data are $v(0,t) = v(1,t) =0$ and the initial data $v(x,0) = v_0(x)$, where $v_0(x) \in C^2[0,1]$ is unspecified except that it satisfies the initial-boundary compatibility condition $v_0(0) = v_0(1) = 0$, so that any solution~$u$ of~\eqref{prob} is continuous on $\bar\Omega \times [0,T]$.

Assume that for each $x$, the solution $v(x,\cdot)$ of this problem lies in $L^1[0,T]$. Then Theorem~\ref{th:1Sty18} and the above compatibility condition show  that $v_0$ must satisfy the conditions
\[
- v_0''(x) = 0 \ \text{ on } (0,1), \quad v_0(0) = v_0(1) = 0.
\]
But these conditions imply that $v_0 \equiv 0$. As all the data of this example are now zero, we get $v\equiv 0$.

Thus, using a continuous-kernel fractional derivative forces the problem to have as its solution $v\equiv 0$; the apparent freedom of choice that one has for~$v_0$ is only an illusion.
\end{example} %%%%%%%%

In~\cite{Sty18}, the differential operator $-\Delta u$ of~\eqref{proba} is replaced by a much more general spatial operator, and  it is shown that under reasonable conditions, \emph{the initial data \eqref{probc} is determined uniquely by the other data of the problem}. Example~\ref{exa:2Sty18} is a particular case of this phenomenon. Such a restrictive condition is  extremely unnatural and it is clearly caused by the use of a continuous-kernel fractional derivative.

%%%%%%%%%%% Section 5 %%%%%%%%%%%%%%%%%%%%

\section{Are CF and ABC derivatives really new and necessary?}\label{S:CF_AB_Not_New}

As we saw in Section \ref{S:NotLeftInverse}, the differential equation \eqref{eq:NonSingDer_General_Eq} with the CF and ABC derivatives (or any other non-singular kernel derivative) requires  $g(0,y_0)=0$ in order to have a solution, where $g(t,y(t))$ is the vector field of the differential equation.

Suppose that one limits the use of these derivatives to those problems that satisfy the condition $g(0,y_0)=0$. We now show that in this special case, the CF and ABC derivatives serve no purpose since the problem can then be described by simpler operators.

Consider the CF initial-value problem \eqref{eq:CF_FDE_Eq}. Assume  that $g(0,y_0)=0$. Then the solution of the problem is (\ref{eq:CF_FDE_Eq_Sol}). Differentiating this equation gives
%% \vskip -10pt
\[
	\frac{\du}{\du t} y(t) = \frac{1-\alpha}{M(\alpha)} \frac{\du}{\du t} g(t,y(t)) + \frac{ \alpha}{M(\alpha)} g(t,y(t))
\]
--- so $y(t)$ is the solution of an integer-order differential equation! Thus there is no need to use a fractional derivative to find $y$; this function can be handled in the framework of classical calculus.

\smallskip

This observation that the  CF derivative is not truly fractional but can be reformulated using integer-order derivatives is discussed in \cite{Tar19}.

Similarly, for the ABC derivative, under the assumption that $g(0,y_0)=0$ one gets
\[
	\DC^{\alpha}_0 y(t) = \frac{1-\alpha}{B(\alpha)} \DC^{\alpha}_0 g(t,y(t)) + \frac{ \alpha}{B(\alpha)} g(t,y(t)),
\]
so $y$ is the solution of a Caputo differential equation and introducing the ABC derivative is unnecessary.

A further connection between CF and ABC derivatives and some standard operators of integer and fractional-order operators was shown in \cite{Giusti18}.

\begin{remark}
The CF and ABC derivatives are sometimes described as special cases of the fractional Prabhakar derivative, but this is not true.  Introduced in \cite{DOvidioPolito2018} to provide a Caputo-like regularization of the operator previously introduced in \cite{KilbasSaigoSaxena2004}, the Prabhakar derivative is defined as
\[
	\DPrC^{\gamma}_{\alpha, \beta, \lambda; 0}f(t) = 	\int_{0}^{t}(t-u)^{m-\beta-1}E^{-\gamma}_{\alpha,m-\beta}\left( \lambda (t-u)^{\alpha} \right)  f^{(m)}(u) \, \du u ,
\]
with $m=\left\lceil \beta \right\rceil$. It is not obtained by replacing the standard power law kernel of the Dzhrbashyan-Caputo derivative with a particular realization of the three-parameter ML function \cite{Prabhakar1971}
\[
E_{\alpha,\beta}^{\gamma}(z) = \frac{1}{\Gamma(\gamma)}\sum_{k=0}^{\infty} \frac{\Gamma(\gamma+k) z^{k}}{k! \Gamma(\alpha k + \beta)}
		, \quad
		\alpha, \beta, \gamma \in \Cset
		, \quad \real(\alpha) > 0 , \,\,\,  z \in \Cset \, ,
\]
\noindent
but rather it is defined as the left-inverse of the Prabhakar integral
\[			
	\JPr^{\gamma}_{\alpha,\beta, \lambda; 0}f(t)
	=\int_{0}^{t}(t-u)^{\beta-1}E^{\gamma}_{\alpha,\beta}\left(\lambda (t-u)^{\alpha} \right)f(u) \du u,  \quad \alpha,\beta>0 \, .
\]
Hence, unlike CF and ABC operators, the Prabhakar derivative $\DPrC^{\gamma}_{\alpha, \beta, \lambda; 0}$ naturally satisfies the fundamental theorem of fractional calculus. Moreover, the kernel of $\DPrC^{\gamma}_{\alpha, \beta, \lambda; 0}f(t)$ is always singular at $t=0$ (except for the limit case $\beta \in \Nset$ discussed in details in \cite{Giusti2020}) and no zero-zero property holds with the Prabhakar derivative. The standard Dzhrbashyan-Caputo derivative of order $\beta$ is obtained when $\gamma=0$ or $\lambda=0$. We refer to \cite{GiustiColombaroGarraGarrappaPolitoPopolizioMainardi2020} for a complete treatment of the Prabhakar fractional calculus.
\end{remark}

%%%%%%%%%%% Section 6 %%%%%%%%%%%%%%%%%%%%%

\section{Are non-singular kernel derivatives really derivatives?}\label{S:TrulyDerivatives}

One should also consider whether operators obtained by inserting a non-singular kernel in the RL and Caputo derivatives can really be described as derivatives. While several  papers make systematic attempts to determine whether or not a new operator is fractional \cite{HilfLuch19,OrtTen15,OrtTen18,Tar13,Tar18}, to the best of our knowledge very few attempts have been made to discern whether or not an operator is a derivative. There are  contributions by Ortigueira and Machado \cite{OrtigueiraMachado2019,OrtigueiraMachado2020} based on systems theory, but we wish to explore this question using only mathematical considerations.

In our Appendix~\ref{SS:IndirectProcess} we describe the \emph{indirect process} for the derivation of  the RL and Caputo derivatives. In this process one first generalises integer-order repeated integrals to any real positive order, then one defines derivatives as operators that are the inverse of the integral (by analogy with integer-order calculus, where the derivative can be viewed as the inverse operator of the integral). The RL (\ref{eq:RL_Derivative}) and Caputo (\ref{eq:Caputo_Derivative}) derivatives obtained in this way are formulated in terms of convolution integrals. Under assumptions that are reasonable and unrestrictive, they are equivalent to operators obtained by a more straightforward generalization of the usual definition of the integer order derivative (see the description of the \emph{direct process} in Appendix~\ref{SS:DirectProcess}).

Defining derivatives by means of integrals may appear unnatural at first sight, but the property of acting as an inverse
of the repeated integral,  and the fact that the same operators can be obtained by generalizing the integer-order derivative, together provide a compelling justification for recognising  the RL and Caputo derivative operators as bona fide derivatives.

The construction of fractional derivatives with non-singular kernels imitates, but only in a partial way, the indirect process described above.
It might appear attractive to modify the RL and Caputo derivatives by replacing their singular kernels by a non-singular function, but then it is difficult to justify the statement that these new operators are really derivatives. Indeed:
\begin{itemize}
	\item as we have shown in Sections~\ref{sec:Hanyga} and~\ref{S:NotLeftInverse}, there is no integral of which non-singular kernel derivatives are the inverse operators; thus the \emph{indirect process} leading to the construction of the RL and Caputo derivatives is partly imitated but is not fully replicated;
	\item there is no evidence that these new ``derivatives" can be generated through a direct generalisation of the integer-order derivative, as described in Appendix \ref{SS:DirectProcess} for the RL and Caputo fractional derivatives.
\end{itemize}

Consequently we think that it is truly questionable to describe as derivatives the operators discussed in this paper. Formulas such as (\ref{eq:CF_Reg_Derivative}) and (\ref{eq:AB_Reg_Derivative}), and more generally (\ref{eq:GeneralDer}), are more akin to integral operators than to  derivatives and calling them derivatives is misleading; to avoid confusion, the more general term \emph{operator} rather than the specific term \emph{derivative} should be used.

\subsection{A further observation} %% 6.1 %%
Integration by parts of (\ref{eq:CF_Reg_Derivative}) and (\ref{eq:AB_Reg_Derivative}) yields
\[
	\begin{aligned}
	\DCF^{\alpha}_{0} f(t) = \frac{M(\alpha)}{1-\alpha} \Biggl[ f(t) - \exp&\Bigl(-\frac{\alpha}{1-\alpha} t\Bigr) f(0) \\
	&- \frac{\alpha}{1-\alpha}\int_{0}^t \exp\Bigl(-\frac{\alpha}{1-\alpha} (t-\tau)\Bigr) f(\tau) \du \tau\Biggr]
	\end{aligned}
\]
\noindent
and
\[
	\begin{aligned}
	\DABC^{\alpha}_{0} f(t) = \frac{B(\alpha)}{1-\alpha} \Biggl[ &f(t) - E_{\alpha}\Bigl(-\frac{\alpha}{1-\alpha} t^{\alpha}\Bigr) f(0) \\
	&- \frac{\alpha}{1-\alpha}\int_{0}^t (t-\tau)^{\alpha-1} E_{\alpha,\alpha}\Bigl(-\frac{\alpha}{1-\alpha} (t-\tau)^{\alpha}\Bigr) f(\tau) \du \tau\Biggr] \
	\end{aligned}
\]
(note that a similar calculation is impossible for derivatives with a singular kernel, such as the  RL and Caputo derivatives). These identities surely cast doubt on any claim that $\DCF^{\alpha}_{0}$ and $\DABC^{\alpha}_{0}$  represent derivatives, since they merely comprise evaluations of the function~$f$ and a weighted integral of~$f$, i.e., no differentiation of~$f$ is involved.

%%%%%%%%%%% Section 7 %%%%%%%%%%%%%%%%%%%%%%%%%%%%%

\section{Concluding remarks}\label{S:Concluding}

In this paper we have discussed the properties and drawbacks of the operators, commonly called non-singular kernel derivatives, that are obtained by replacing the singular kernel of the Caputo derivative with a non-singular function. While the so-called CF and ABC derivatives are the best-known operators of this type, our analysis covers any derivative with non-singular kernel.

We have shown that non-singular kernel derivatives do not in general have an  inverse that can  be written as  a convolution integral --- unlike the RL and Caputo derivatives, which enjoy this property. One can construct an integral with the operator as its left-inverse only if the function is zero at the origin. This follows from a ``zero-zero" property: non-singular kernel derivatives are always zero when evaluated at the initial time~$t=0$. Consequently, it is possible to solve differential equations with non-singular kernel derivatives only when a very restrictive and unnatural assumption is made on the initial condition.

We then go on to show that if one accepts this restrictive condition, then the CF and ABC derivatives can be replaced by finite combinations of operators that are already known from classical calculus and the Caputo derivative calculus.

We also cast doubt on the belief that a  non-singular kernel derivative can  be  regarded as a true form of derivative.

Our overwhelming conclusion from all this evidence is that derivatives with non-singular kernel should never be used.

\appendix %%%%%%%%%%%%%%%%%%%%%%%%%%%%%%%%%%%%%%%%%5

\section{Background material on fractional calculus}\label{S:BackgroundMaterial} %% App. A %%

Standard fractional derivatives such as the RL and Dzhrbashyan-Caputo derivatives can be introduced by following a \emph{direct process} that starts from the integer-order derivative and leads to a fractional generalisation of the difference quotient. Alternatively, by an \emph{indirect process} one first obtains the RL integral as a generalisation of the usual integer-order repeated integral, then inverses of this integral, which are formulated in terms of a convolution integrals, are defined as fractional derivatives.

For completeness of exposition we briefly describe here the two processes and show that they lead to equivalent operators.

\subsection{Indirect process: generalization of integer-order integrals and inversion}\label{SS:IndirectProcess} %% A.1 %%

To introduce fractional derivatives, begin by considering the standard (integer-order) $n$-fold repeated integral
\begin{equation}\label{eq:nInteger_Integral}
	\begin{aligned}
	J_{0}^{n} f(t) \coloneqq
	 \frac{1}{(n-1)!}\int_{0}^{t} (t-\tau)^{n-1} f(\tau) \du \tau
	, \quad t > 0. \\
	\end{aligned}
\end{equation}
Then the Euler-Gamma function $\Gamma(x)= \int_0^{\infty}t^{x-1} \eu^{-t} \du t$, $\Re(x)>0$, which satisfies $\Gamma(n) = (n-1)!$ for $n\in\Nset$, allows us to extend~\eqref{eq:nInteger_Integral} from integers~$n$  to any real positive number~$\alpha$ by setting
\vskip -10pt
\begin{equation}\label{eq:RL_Integral}
 \IRL_{0}^{\alpha} f(t) \coloneqq \frac{1}{\Gamma(\alpha)} \int_{{0}}^{t}
(t-\tau)^{\alpha-1} f(\tau) \du \tau
	, \quad  t > 0.
\end{equation}
This is the fractional Riemann-Liouville integral.

A left-inverse  of the operator $\IRL_{0}^{\alpha}$ is an operator that when applied to $\IRL_{0}^{\alpha} f(t)$ gives back the original function $f(t)$. It is possible to find more than one such  operator. In fact, writing $m= \left\lceil \alpha \right \rceil$ for the smallest integer greater than or equal to $\alpha$ and $D^{m}$  for the usual integer-order differentiation, both the fractional RL derivative
\begin{equation}\label{eq:RL_Derivative}
	\DR_{0}^{\alpha} f(t) \coloneqq
	\frac{1}{\Gamma(m-\alpha)} \frac{\du^{m}}{\du t^{m}} \int_{{0}}^{t} (t-\tau)^{m-\alpha-1} f(\tau) \du \tau,  \quad
	t > 0
\end{equation}
\noindent
and the Caputo derivative
\begin{equation}\label{eq:Caputo_Derivative}
	\DC_{0}^{\alpha} f(t) \coloneqq
	\frac{1}{\Gamma(m-\alpha)} \int_{0}^{t} (t-\tau)^{m-\alpha-1} f^{(m)}(\tau) \du \tau , \quad
	t > 0, 	
\end{equation}
are left-inverses of $\IRL_{0}^{\alpha}$ since $\DR_{0}^{\alpha} \bigl[ \IRL_{0}^{\alpha} f \bigr]  = \DC_{t_0}^{\alpha} \bigl[ \IRL_{0}^{\alpha} f \bigr] = f$ under reasonable hypotheses on~$f$ (see, e.g., \cite[Theorems 2.14 and 3.7]{Diet10}).
It is well known  \cite[Definition 3.2]{Diet10} that these two operators are related by
%% \vskip -10pt
\begin{equation}\label{eq:RL_Caputo_Relationship}
	\DC_{0}^{\alpha} f(t) = \DR_{0}^{\alpha} \Bigl[ f(t) - T_{m-1}[f,0](t) \Bigr],
\end{equation}
\noindent
where $T_{m-1}[f,0](t)$ is the Taylor polynomial of $f(t)$ expanded around $0$, viz.,
\[
	T_{m-1}[f,0](t) = \sum_{k=0}^{m-1} \frac{t^{k}}{k!} f^{(k)}(0).
\]

\subsection{Direct process: generalization of integer-order derivatives}\label{SS:DirectProcess} %% A.2 %%

To describe a more \emph{direct process} we first consider the usual definition of the first-order derivative
\begin{equation}\label{eq:IntegerOrderDerivativeFirst}
	f'(t) = \lim_{h\to 0^+} \frac{f(t) - f(t-h)}{h} ,
\end{equation}
which is easily extended to any $n \in \Nset$ by simple recursion to obtain
\begin{equation}\label{eq:IntegerOrderDerivative}
	f^{(n)}(t) = \lim_{h\to0^+} \frac{1}{h^n} \sum_{j=0}^{n} \omega_{j}^{(n)} f(t-jh)
	, \quad
	\omega_{j}^{(n)} = (-1)^j \binom{n}{j}.
\end{equation}
(For ease of presentation we take into consideration only limits from the right).
Once again appealing to the Euler-Gamma function, the binomial coefficients can be reformulated as
\begin{equation}\label{eq:BinomialCoefficientsInteger}
	\binom{n}{j} =  \displaystyle\frac{n!}{j!(n-j)!} = \left\{\begin{array}{ll}
		\displaystyle\frac{\Gamma(n+1)}{j!\Gamma(n+1-j)} \quad & j=0,1,\dots,n , \\
		0 & j > n .\\
	\end{array}\right.
\end{equation}
\noindent
Then the  simple observation that $\omega_{j}^{(n)} = 0$ for any $j > n$ allows us to rewrite (\ref{eq:IntegerOrderDerivative}) as an infinite series
\[
	f^{(n)}(t) = \lim_{h\to0^+}  \frac{1}{h^n} \sum_{j=0}^{\infty} \omega_{j}^{(n)} f(t-jh)
	, \quad
\]
\noindent
and, since (\ref{eq:BinomialCoefficientsInteger}) permits coefficients $\omega_{j}^{(n)}$ with  $n$ replaced by a non-integer parameter $\alpha>0$, a generalisation to fractional order of  (\ref{eq:IntegerOrderDerivative})  is easily obtained:
\begin{equation}\label{eq:FOD_GL}
	\DG^{\alpha} f(t) = \! \lim_{h\to0^+} \! \frac{1}{h^{\alpha}} \sum_{j=0}^{\infty} \omega_{j}^{(\alpha)} f(t-jh)
	, \quad
	\omega_{j}^{(\alpha)} = (-1)^j\frac{\Gamma(\alpha+1)}{j! \Gamma(\alpha-j+1)} .
\end{equation}

The operator $\DG^{\alpha}$ is generally known as the Gr\"unwald-Letnikov (GL) fractional derivative because proposed independently by Gr\"unwald~\cite{Grunwald1867} and Letnikov~\cite{Letnikov1868} in 1867 and 1868 respectively.  Although it is perhaps the most straightforward generalization of the integer-order derivative to any fractional order, it has some drawbacks:
\begin{itemize}
	\item $\DG^{\alpha}f(t)$ requires the knowledge of the whole history of the function~$f$ in $(-\infty,t]$:  while this may not be a difficulty from a purely mathematical point of view when $f(t)$ is known analytically for all~$t$, when $\DG^{\alpha}$ is applied in differential equations the solution $f(t)$ (usually the state of a system) is known only starting from a given initial time. For this reason the use of $\DG^{\alpha}$ is mainly confined to signals theory, where signals are often decomposed into $\sin$ and $\cos$ functions whose values are available for all~$t$;
	\item the series in (\ref{eq:FOD_GL}) converges only for a restricted class of functions (for instance, bounded functions when $0<\alpha<1$) and this limitation is too  restrictive for general applications.
\end{itemize}

To overcome these drawbacks, a common strategy  is to fix a starting point, say for convenience $0$, and impose suitably chosen values for $f(t)$ on $(-\infty,0)$. Usually the following functions are considered:
\[
	\fR(t) = \left\{ \begin{array}{ll}
		0 & t \in (-\infty, 0) \\
		f(t) \, \, & t \ge 0 \
	\end{array}\right.
	, \quad
	\fC(t) = \left\{ \begin{array}{ll}
		T_{m-1}[y,0](t) & t \in (-\infty, 0) \\
		f(t) \, \, & t \ge 0 . \
	\end{array}\right.
\]
The replacement of $f$ by $\fR$ or $\fC$, together with the property of the coefficients $\omega_{j}^{(\alpha)}$ (see, e.g., \cite{GarrappaKaslikPopolizio2019}) that $\sum_{j=0}^{\infty} \omega_{j}^{(\alpha)} j^{k} = 0$ for $k=0,1,\dots,m-1$, leads to the  two distinct fractional derivatives
\[
	\begin{aligned}
	&\DG^{\alpha} \fR(t) = \lim_{h\to0^+} \frac{1}{h^{\alpha}} \sum_{j=0}^{N} \omega_{j}^{(\alpha)} f(t-jh)
	, \quad \\
	&\DG^{\alpha} \fC(t) = \lim_{h\to0^+} \frac{1}{h^{\alpha}} \sum_{j=0}^{N} \omega_{j}^{(\alpha)} \Bigl[ f(t-jh) - T_{m-1}[y,0](t-jh) \Bigr] , \\
	\end{aligned}
\]
\noindent
where $N=\left\lfloor t/h\right\rfloor$.

\smallskip

Interestingly, there is a link between the indirect and direct processes. A result in fractional calculus \cite[Theorem 2.25]{Diet10} states that if $f \in C^{m}[0,T]$, then $\DG^{\alpha} \fR(t) = \DR_{0}^{\alpha} f(t)$; consequently, in view of (\ref{eq:RL_Caputo_Relationship}), one also has $	\DG^{\alpha} \fC(t) = \DC_{0}^{\alpha} f(t)$.

%%%%%%%%%%%%%%%%%%%%%%%%%%%%  App. B %%%%%%%%%%%%%

\section{CF and ABC  derivatives are not left-inverse \break
         of CF and AB integrals}\label{S:ProofDerivativeNotRightInverseIntegral}

For completeness, in this section we give the elementary derivations showing that the CF derivative $\DCF_0^{\alpha}$ is not the left-inverse of the so-called CF integral $\ICF^{\alpha}_0$. For notational convenience, throughout this section we use the abbreviation
\[
	W_{\alpha} = \frac{\alpha}{1-\alpha} .
\]

\bigskip

%\begin{proof}[Proof of Proposition \ref{thm:ICF_NotRightInverse_DCF}]
P\,r\,o\,o\,f\, o\,f\, P\,r\,o\,p\,o\,s\,i\,t\,i\,o\,n\,  \ref{thm:ICF_NotRightInverse_DCF}.\,
Let $f \in AC[0,T]$.  Observe that	
\[
	\DCF_0^{\alpha} \bigl[ \ICF^{\alpha}_0 f(t) \bigr]
	= \underbrace{\frac{1-\alpha}{M(\alpha)} \DCF_0^{\alpha}  f(t)}_{(A)} + \underbrace{\frac{ \alpha}{M(\alpha)} \DCF_0^{\alpha}  \int_0^t f(s) \du s}_{(B)}  .
\]	
Integration by parts allows us to evaluate the first integral (A):
\[
	\begin{aligned}
	(A) &= \int_0^t \exp\Bigl(-W_{\alpha} (t-\tau)\Bigr) f'(\tau) \du \tau \\
	& = f(t) - \exp\Bigl(-W_{\alpha} t\Bigr) f(0) - W_{\alpha} \int_0^t \exp\Bigl(-W_{\alpha} (t-\tau)\Bigr) f(\tau) \du \tau .
	\end{aligned}
\]
For the integral (B), one has immediately
\[
	\begin{aligned}
	(B)
	&= W_{\alpha} \int_0^t \exp\Bigl(-W_{\alpha} (t-\tau)\Bigr) \frac{\du}{\du \tau} \int_0^{\tau} f(s) \du s \, \du \tau \\
	&= W_{\alpha} \int_0^t \exp\Bigl(-W_{\alpha} (t-\tau)\Bigr)  f(\tau) \du \tau .
	\end{aligned}
\]
Now we are done, since $\DCF_0^{\alpha} \bigl[ \ICF^{\alpha}_0 f(t) \bigr] = (A) + (B)$.
%\end{proof}
\hfill $\square$ %%%%%%%%%%%%%%%%%%%%%%%%%

\bigskip

In a similar way, one can show that the ABC derivative $\DABC_0^{\alpha}$ is not the left-inverse of the so-called AB integral $\IAB^{\alpha}_0$.

\bigskip

%\begin{proof}[Proof of Proposition \ref{thm:IAB_NotRightInverse_DABC}]
P\,r\,o\,o\,f\, o\,f\, P\,r\,o\,p\,o\,s\,i\,t\,i\,o\,n\,  \ref{thm:IAB_NotRightInverse_DABC}.\,
Let $f \in AC[0,T]$.  Observe that
$$
	\DABC_0^{\alpha} \bigl[ \IAB_0^{\alpha} f(t) \bigr]
	= \underbrace{\frac{1-\alpha}{B(\alpha)} \DABC_0^{\alpha} f(t)}_{(C)} + \underbrace{\frac{ \alpha}{B(\alpha)} \DABC_0^{\alpha} \int_0^t \frac{(t-\tau)^{\alpha-1}}{\Gamma(\alpha)} f(\tau) \du \tau}_{(D)} .
$$
Integration by parts yields
\[
	\begin{aligned}
	(C)	&= \int_0^t E_{\alpha}\Bigl(-W_{\alpha} (t-\tau)^{\alpha}\Bigr) f'(\tau) \du \tau \\
	&=  f(t) - E_{\alpha}\Bigl(-W_{\alpha } t^{\alpha} \Bigr) f(0)
	 - W_{\alpha}\int_0^t \tau^{\alpha-1} E_{\alpha,\alpha}\Bigl(-W_{\alpha } \tau^{\alpha} \Bigr) f(t-\tau) \du \tau.
	\end{aligned}
\]
\noindent
To evaluate (D) we first observe that
\[
(D) =  W_{\alpha} \int_0^t E_{\alpha}\Bigl(-W_{\alpha} (t-\tau)^{\alpha}\Bigr) \frac{\du }{\du \tau} \IRL^{\alpha}_0 f(\tau)
\]
and by means of the LT we can evaluate \cite[Eq. (1.10)]{Mainardi2010}
\[
	{\mathcal L} \left( \frac{\du }{\du \tau} \IRL^{\alpha}_0 f(\tau) \du \tau \, ; \, s \right) = s \frac{1}{s^{\alpha}} \hat{f}(s) - \IRL^{\alpha}_0 f(0^+) = s^{1-\alpha} \hat{f}(s) ,
\]
where we used standard rules for the LT of the first-order derivative together with $\IRL^{\alpha}_0 f(0^+)=0$ since $f \in AC[0,T]$.  Therefore, since the LT of $t^{\beta-1} E_{\alpha,\beta}(-\lambda t^\alpha )$ is $s^{\alpha-\beta}/(s^{\alpha}+\lambda)$ \cite[Eq. (4.10.1)]{GorenfloKilbasMainardiRogosin2014}, we  get
\[
{\mathcal L} \left( \int_0^t E_{\alpha}\Bigl(-W_{\alpha} (t-\tau)^{\alpha}\Bigr) \frac{\du }{\du \tau} \IRL^{\alpha}_0 f(\tau)  \, ; \, s \right) = \frac{1}{s^{\alpha} + W_{\alpha}}  \hat{f}(s)  .
\]
The inversion of the LT gives
\[
	(D) = W_{\alpha} \int_0^t \tau^{\alpha-1} E_{\alpha,\alpha}\Bigl(-W_{\alpha} \tau^{\alpha}\Bigr) f(t-\tau) \du \tau
\]
\noindent
from which the result follows since $\DABC_0^{\alpha} \bigl[ \IAB_0^{\alpha} f(t) \bigr] = (C) + (D)$.
%\end{proof}
\hfill $\square$ %%%%%%%%%%%%%%%%%%%%%%%%%%

\section*{Acknowledgments}	%%%%%%%%%%%%%%%%

The cooperation which has lead to this paper has been initiated and promoted within the COST Action CA15225, a network supported by COST (European Cooperation in Science and Technology). The work of Kai Diethelm is also supported by the German Federal Ministry of Education and Research (BMBF) under Grant No.\ 01IS17096A. The work of Roberto Garrappa is also supported under a GNCS-INdAM 2020 Project. Andrea Giusti is supported by the Natural Sciences and Engineering Research Council of Canada (Grant No.~2016-03803 to V. Faraoni) and by Bishop's University. The research of Martin Stynes is supported in part by the National Natural Science Foundation of China under grant NSAF U1930402.

\vskip 1 cm

\end{document}